\batchmode
\makeatletter
\def\input@path{{"/Users/russw/Documents/Research/mypapers/A modular characterization of supersolvable lattices/"}}
\makeatother
\documentclass[12pt,oneside,english,lowtilde]{amsart}
\usepackage[T1]{fontenc}
\usepackage[latin9]{inputenc}
\usepackage{geometry}
\geometry{verbose,tmargin=3cm,bmargin=3cm,lmargin=3cm,rmargin=3cm}
\usepackage{babel}
\usepackage{wrapfig}
\usepackage{url}
\usepackage{enumitem}
\usepackage{amstext}
\usepackage{amsthm}
\usepackage{amssymb}
\usepackage{graphicx}
\usepackage[dvips,unicode=true,pdfusetitle,
 bookmarks=true,bookmarksnumbered=false,bookmarksopen=false,
 breaklinks=false,pdfborder={0 0 1},backref=false,colorlinks=false]
 {hyperref}
\usepackage{breakurl}

\makeatletter
\numberwithin{equation}{section}
\numberwithin{figure}{section}
\theoremstyle{plain}
\newtheorem{thm}{\protect\theoremname}[section]
\theoremstyle{plain}
\newtheorem{lem}[thm]{\protect\lemmaname}
\theoremstyle{remark}
\newtheorem{rem}[thm]{\protect\remarkname}
\theoremstyle{plain}
\newtheorem{question}[thm]{\protect\questionname}

\usepackage{lmodern}

\setlength\intextsep{.5em}

\makeatother

\providecommand{\lemmaname}{Lemma}
\providecommand{\questionname}{Question}
\providecommand{\remarkname}{Remark}
\providecommand{\theoremname}{Theorem}

\begin{document}
\global\long\def\cc{\mathbb{C}}%

\global\long\def\link{\operatorname{link}}%

\title{A modular characterization of supersolvable lattices}
\author{Stephan Foldes and Russ Woodroofe}
\thanks{Work of the second author is supported in part by the Slovenian Research
Agency (research program P1-0285 and research projects J1-9108, N1-0160,
J1-2451).}
\address{Miskolci Egyetem, 3515 Miskolc-Egyetemvaros, Hungary}
\email{foldes.istvan@uni-miskolc.hu}
\address{Univerza na Primorskem, Glagoljaška 8, 6000 Koper, Slovenia}
\email{russ.woodroofe@famnit.upr.si}
\urladdr{\url{https://osebje.famnit.upr.si/~russ.woodroofe/}}
\begin{abstract}
We characterize supersolvable lattices in terms of a certain modular
type relation. McNamara and Thomas earlier characterized this class
of lattices as those graded lattices having a maximal chain that consists
of left-modular elements. Our characterization replaces the condition
of gradedness with a second modularity condition on the maximal chain
of left-modular elements.
\end{abstract}

\maketitle

\section{\label{sec:Introduction}Introduction and background}

A supersolvable lattice is one that is well-behaved in a similar way
to the subgroup lattice of a supersolvable group. Stanley in a 1972
paper \cite{Stanley:1972} defined a lattice $L$ to be \emph{supersolvable}
if $L$ admits a maximal chain $\mathbf{m}$, which we call a \emph{chief
chain} (or \emph{$M$-chain}), so that the sublattice generated by
$\mathbf{m}$ and any other chain $\mathbf{c}$ is distributive. Since
then, this class of lattices has been characterized in a number of
different ways. We first give the list of characterizations, and will
explain the terminology used shortly after.
\begin{thm}[Liu, McNamara and Thomas]
\label{thm:OldChars} For a finite lattice $L$ and maximal chain
$\mathbf{m}$, the following are equivalent.
\begin{enumerate}
\item \label{enu:Supersolvable}$\mathbf{m}$ is a chief chain (and so $L$
is supersolvable),
\item $L$ admits an $EL$-labeling with ascending chain $\mathbf{m}$ so
that every maximal chain is labeled with a permutation of a fixed
label set $[n]$, and
\item \label{enu:Char-GradedLM}$L$ is graded and $\mathbf{m}$ consists
of left-modular elements.
\end{enumerate}
\end{thm}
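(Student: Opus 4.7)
The natural plan is to prove the equivalences cyclically, establishing $(1) \Rightarrow (3) \Rightarrow (2) \Rightarrow (1)$. This ordering reflects the natural trajectory, moving from the strongest and most structural hypothesis through an intermediate combinatorial condition, and back. Some parts trace back to Liu, and the middle implications are essentially McNamara's; my sketch indicates how I would reconstruct each step rather than cite the literature blindly.

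For $(1) \Rightarrow (3)$: Gradedness is immediate, since for any second maximal chain $\mathbf{c}$ of $L$, the sublattice generated by $\mathbf{m} \cup \mathbf{c}$ is finite distributive, hence graded, which forces $\mathbf{c}$ to have length $n = |\mathbf{m}| - 1$. To show each $m_i \in \mathbf{m}$ is left-modular, given $y \leq z$ in $L$, I would apply the distributive hypothesis to the chain $\mathbf{c} = \{\hat{0}, y, z, \hat{1}\}$ and read the modular identity $(y \vee m_i) \wedge z = y \vee (m_i \wedge z)$ off the distributive laws available in the generated sublattice.

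For $(3) \Rightarrow (2)$: Writing $\mathbf{m} \colon \hat{0} = m_0 \lessdot m_1 \lessdot \cdots \lessdot m_n = \hat{1}$, I would define the edge labeling
\[
\lambda(u \lessdot v) = \min\bigl\{ i : m_i \vee u \geq v \bigr\}.
\]
Left-modularity of each $m_i$ is precisely what is needed to show existence and uniqueness of a rising maximal chain in every interval $[x,y]$, while gradedness forces every maximal chain of $L$ to use each label in $[n]$ exactly once. The main labor here is a careful induction on rank: one must check not only that the rising chain is lexicographically minimal, but that no other maximal chain in a fixed interval can have a weakly ascending label sequence, and this is where the left-modular identity is invoked repeatedly.

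For $(2) \Rightarrow (1)$: Given a chain $\mathbf{c}$, I would associate to each element $x$ in the sublattice $L' = \langle \mathbf{m} \cup \mathbf{c} \rangle$ the label set $S_x \subseteq [n]$ of the (unique) rising maximal chain from $\hat{0}$ to $x$, extended by the labels on a rising chain from $x$ to $\hat{1}$ to cover the case $x \neq \hat{1}$. The permutation property in (2) guarantees that $x \mapsto S_x$ is injective on intervals rooted at $\hat{0}$, and I would then verify directly that this assignment takes meets to intersections and joins to unions on $L'$, embedding $L'$ as a sublattice of $2^{[n]}$ and thereby establishing distributivity. The main obstacle here is showing that $L'$ is actually \emph{generated} by the Boolean combinatorics rather than just bounded by it: one needs to track how meets and joins of chain elements of $\mathbf{m}$ with chain elements of $\mathbf{c}$ produce label sets that remain closed under intersection and union, and this will require a second induction comparing the ranks of intermediate joins to the sizes of their label sets.
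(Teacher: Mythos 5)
You should first note that the paper does not prove Theorem~\ref{thm:OldChars} at all: it is quoted as background, with (3)~$\Rightarrow$~(2) credited to Liu, (1)~$\Leftrightarrow$~(2) to McNamara, and the assembly to McNamara--Thomas. What the paper does prove is a new, $EL$-labeling-free route between (1) and (3), by interposing the chain-modularity condition: Section~\ref{sec:StanleySupersolvable} shows (1)~$\Leftrightarrow$~(4) via the Birkhoff--Stanley distributivity argument (Lemma~\ref{lem:ExtBirkhoffLemma}), and Section~\ref{sec:GradedLM} shows (4)~$\Leftrightarrow$~(3)~$\Leftrightarrow$~(5) via rank modularity. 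Your cycle $(1)\Rightarrow(3)\Rightarrow(2)\Rightarrow(1)$ runs straight through the $EL$-labeling machinery that the paper is explicitly trying to avoid, so it is a genuinely different --- in fact, the classical --- route. Your step $(1)\Rightarrow(3)$ is correct and essentially matches the opening of Section~\ref{sec:StanleySupersolvable}: distributivity of the generated sublattice gives gradedness and, by testing against one- and two-element chains, the modularity conditions.

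The problem is that the two implications carrying all the weight are only named, not proved. For $(3)\Rightarrow(2)$ you write down the correct left-modular labeling $\lambda(u\lessdot v)=\min\{i: m_i\vee u\geq v\}$, but the assertion that left-modularity ``is precisely what is needed'' for unique rising chains, and that gradedness forces the permutation property, is the entire content of Liu's theorem; the induction you defer is where the work lives. For $(2)\Rightarrow(1)$ there is in addition an error as stated: the map $x\mapsto S_x$ is not injective on intervals rooted at $\hat{0}$ in general (in $\Pi_{3}$ with the labeling above, the two atoms off the chief chain both receive label set $\{2\}$), so injectivity and the compatibility of $S$ with meets and joins can only hold after restricting to the sublattice generated by $\mathbf{m}$ and $\mathbf{c}$ --- and proving that restricted statement, including well-definedness of $S_x$ independent of the chosen maximal chain of $[\hat{0},x]$, is exactly McNamara's argument. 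So the proposal is a roadmap of the literature's proof rather than a proof, and it purchases none of the simplification that motivates the paper; if your goal is a self-contained argument, the paper's detour through condition (4) is the more economical path for the $(1)\Leftrightarrow(3)$ equivalence.
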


The direction (3) $\implies$ (2) was first proved in Liu's PhD thesis
\cite{Liu:1999}; see also \cite{Liu/Sagan:2000}. McNamara showed
(1) $\iff$ (2) as part of his own PhD work, and published the results
in \cite{McNamara:2003}. McNamara and Thomas noticed the connection
with Liu's work in \cite{McNamara/Thomas:2006}, completing Theorem~\ref{thm:OldChars}.

Stanley's paper \cite{Stanley:1972} on supersolvable lattices has
been cited hundreds of times. Supersolvability is useful for understanding
examples such as non-crossing partition lattices \cite{Hersh:1999,Hersh:2003}.
Recent papers that discuss supersolvable lattices include \cite{Adaricheva:2017,Can:2012,Hallam/Sagan:2015,Muhle:2018}.

We now explain the definitions used in Theorem~\ref{thm:OldChars}.
A \emph{edge labeling over $\mathbb{Z}$} of a lattice assigns an
integer label to each edge of the Hasse diagram. Thus, we associate
a word in the label set to each maximal chain on an interval. An \emph{$EL$-labeling}
is an edge labeling over $\mathbb{Z}$ so that each interval has a
unique lexicographically earliest maximal chain, and so that this
chain is the only ascending maximal chain on the interval. We will
not discuss $EL$-labelings further here, although closely-related
notions were a main motivation for the definition of supersolvable
lattices \cite{Bjorner:1980,Stanley:1972,Stanley:1974}.

Modularity requires more discussion. A lattice is \emph{modular} if
for every three elements $z$ and $x<y$, we have $\left(x\vee z\right)\wedge y=x\vee\left(z\wedge y\right)$.
Equivalent to this condition, though not immediately so, is that for
every such $z$ and $x<y$, we have either $z\wedge x\neq z\wedge y$
or $z\vee x\neq z\vee y$. Thus, the sublattice generated by $x$,
$y$, and $z$ is not a pentagon, so we see that a modular lattice
is exactly one with no pentagon sublattice.

One may think of a modular lattice as being analogous to the subgroup
lattice of an abelian group. There are multiple conditions on lattice
elements that are analogous to normality of a subgroup in the subgroup
lattice of a finite group, most or all of which come down to requiring
the modular identity $x\vee\left(z\wedge y\right)=\left(x\vee z\right)\wedge y$
for certain elements $z$ and $x<y$. A pair of lattice elements $(z,y)$
is a \emph{modular pair} if whenever $x<y$, we have $x\vee\left(z\wedge y\right)=\left(x\vee z\right)\wedge y$.
Now we say that an element $m$ of the lattice $L$ is \emph{left-modular}
if $\left(m,y\right)$ forms a modular pair for each $y\in L$, and
that $m$ is \emph{right-modular} if $\left(z,m\right)$ is a modular
pair for each $z\in L$. If $m$ satisfies both, we say it is \emph{two-sided
modular}.

\begin{wrapfigure}{r}{0.37\columnwidth}%
\begin{centering}
\includegraphics{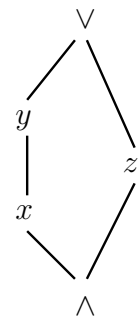}
\par\end{centering}
\centering{}\caption{$\ $A non-modular lattice}
\end{wrapfigure}%

Dedekind's modular identity from group theory says that if $A$ and
$B\subseteq C$ are subgroups of a group $G$, then $B(A\cap C)=BA\cap C$.
It follows easily from Dedekind's modular identity that a normal subgroup
in the dual of a subgroup lattice is two-sided modular. (It may be
helpful to recall here that if $A$ normalizes $B$, then $BA=AB=A\vee B$.)
Stanley showed in \cite{Stanley:1972} that if $L$ admits a maximal
chain of two-sided modular elements, then $L$ is supersolvable. It
is easy to find counterexamples to the converse, as supersolvability
is a self-dual property, but right modularity is not. Theorem~\ref{thm:OldChars}~(\ref{enu:Char-GradedLM})
is a partial converse to Stanley's condition, but requires the global
lattice property of gradedness.

In this paper, we add two conditions to the list of characterizations
of supersolvable lattices. First, we give a characterization of supersolvability
purely in terms of modular conditions on elements. We say that a chain
of elements $\mathbf{m}$ in lattice $L$ is \emph{right chain-modular}
if for every pair $x<y$ of elements in $\mathbf{m}$ and every $z\in L$,
we have $x\vee\left(z\wedge y\right)=\left(x\vee z\right)\wedge y$.
For convenience, we say that the chain $\mathbf{m}$ is \emph{chain-modular}
if it is right chain-modular and if every element in the chain is
left-modular.
\begin{thm}
\label{thm:MainTheorem}The maximal chain $\mathbf{m}$ of finite
lattice $L$ is a chief chain if and only if
\begin{enumerate}[label=(4),ref=4]
\item \label{enu:ChainModular}$\mathbf{m}$ is a chain-modular maximal
chain.
\end{enumerate}
\end{thm}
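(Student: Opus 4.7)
The forward direction is direct: by Theorem~\ref{thm:OldChars}(3) a chief chain $\mathbf{m}$ consists of left-modular elements, and the sublattice generated by $\mathbf{m} \cup \{z\}$ is distributive for each $z \in L$. Since in any distributive lattice $x \vee (z \wedge y) = (x \wedge y) \vee (z \wedge y) = (x \vee z) \wedge y$ whenever $x \leq y$, right chain-modularity follows immediately.

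For the converse, assume $\mathbf{m}$ is chain-modular. The left-modularity half of the hypothesis, combined with Theorem~\ref{thm:OldChars}(3), reduces the problem to showing that $L$ is graded. Write $\mathbf{m}: \hat{0} = m_0 \lessdot m_1 \lessdot \cdots \lessdot m_n = \hat{1}$ and, for each $z \in L$, define $T(z) = \{i : z \wedge m_{i-1} < z \wedge m_i\}$ and $S(z) = \{i : z \vee m_{i-1} < z \vee m_i\}$. My plan is to show that $|T|$ is a rank function. Two preliminary facts come quickly. First, applying right chain-modularity at the cover $m_{i-1} \lessdot m_i$ gives $m_{i-1} \vee (z \wedge m_i) = (m_{i-1} \vee z) \wedge m_i \in \{m_{i-1}, m_i\}$; the first possible value forces $i \notin T(z)$, the second forces $i \notin S(z)$, and both cases at once would give $m_{i-1} = m_i$, so $T(z) \sqcup S(z) = \{1, \ldots, n\}$. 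Second, for $i \in T(z)$ and $u \in [z \wedge m_{i-1}, z \wedge m_i]$, we have $m_{i-1} \vee u \in \{m_{i-1}, m_i\}$, and a short case check using the left-modularity of $m_{i-1}$ pins $u$ to one of the two endpoints; so the meet chain $\{z \wedge m_i\}$ jumps only by covers, and dually for the join chain.

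The main obstacle is the rank step: $z \lessdot w$ must imply $|T(w)| = |T(z)| + 1$. Set $i^\star = \min\{i : z \wedge m_i < w \wedge m_i\}$ and $j^\star = \max\{j : z \vee m_j < w \vee m_j\}$. The strategy is to prove $i^\star = j^\star + 1$; once this is in hand, the partition fact together with the monotonicity of $I = \{i : z \wedge m_i < w \wedge m_i\}$ and $J = \{j : z \vee m_j < w \vee m_j\}$ will reduce $T(w) \setminus T(z)$ to the singleton $\{i^\star\}$. For $i^\star \leq j^\star + 1$, left-modularity does the heavy lifting: since $m_{i^\star - 1} \wedge w = z \wedge m_{i^\star - 1} \leq z$, applying left-modularity of $m_{i^\star - 1}$ to the pair $(m_{i^\star - 1}, w)$ with $x = z$ yields $(z \vee m_{i^\star - 1}) \wedge w = z$, whence $w \not\leq z \vee m_{i^\star - 1}$ and $i^\star - 1 \in J$. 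For the reverse inequality $i^\star \geq j^\star + 1$, the covering hypothesis $z \lessdot w$ is what is needed: because $w \wedge m_{i^\star} \not\leq z$, the element $(w \wedge m_{i^\star}) \vee z$ lies in $[z, w]$ and is not $z$, so it equals $w$; this forces $z \vee m_{i^\star} \geq w$, hence $z \vee m_{i^\star} = w \vee m_{i^\star}$, so $i^\star \notin J$. This two-sided interplay between the left-modular identity and the covering hypothesis is the delicate heart of the proof, and once $i^\star = j^\star + 1$ is established, $|T|$ is seen to be a rank function, so $L$ is graded and Theorem~\ref{thm:OldChars}(3) concludes.
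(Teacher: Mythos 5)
Your proof is correct --- I verified the partition fact $T(z)\sqcup S(z)=\{1,\dots,n\}$, the claim that the meet chain jumps only by covers, both inequalities in $i^{\star}=j^{\star}+1$, and the bookkeeping that then yields $T(w)=T(z)\sqcup\{i^{\star}\}$ --- but it is worth placing it against the paper's two proofs. It is entirely different from the first proof (Section~\ref{sec:StanleySupersolvable}), whose converse direction never mentions gradedness: there one proves the pair of dual Birkhoff--Stanley identities of Lemma~\ref{lem:ExtBirkhoffLemma} by a simultaneous self-dual induction and concludes that the sublattice generated by $\mathbf{m}$ and any chain is distributive, so that no appeal to Theorem~\ref{thm:OldChars} is needed at all. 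Your argument instead shares its skeleton with the second proof (Section~\ref{sec:GradedLM}): your $|T(y)|$ is exactly the paper's candidate rank function $\rho(y)=\#\{i:m_{i+1}\wedge y>m_{i}\wedge y\}$, and both arguments finish by quoting the known equivalence of Conditions (\ref{enu:Char-GradedLM}) and (\ref{enu:Supersolvable}). The difference lies in how the cover step $\rho(w)=\rho(z)+1$ is reached: the paper first establishes the rank-modular identity $\rho(x\vee m_{j})+\rho(x\wedge m_{j})=\rho(x)+\rho(m_{j})$ by a counting argument and then applies it twice, whereas you bypass rank modularity and argue directly with the first meet-jump $i^{\star}$ and the last join-jump $j^{\star}$, using left-modularity of $m_{i^{\star}-1}$ for $i^{\star}\leq j^{\star}+1$ and the covering hypothesis $z\lessdot w$ for $i^{\star}\geq j^{\star}+1$. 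The paper's detour buys Theorem~\ref{thm:RankMod} and Lemma~\ref{lem:RankEqLeftModular} as byproducts; your route is leaner if gradedness is the only goal, but, like the paper's second proof and unlike its first, it is not self-contained, since Theorem~\ref{thm:OldChars}~(\ref{enu:Char-GradedLM}) rests on $EL$-labelings or on the theory of free lattices.
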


We will give two different proofs of Theorem~\ref{thm:MainTheorem},
both showing directly that condition (\ref{enu:ChainModular}) implies
condition (\ref{enu:Supersolvable}), and also that it implies condition
(\ref{enu:Char-GradedLM}). A consequence will be a new proof of the
equivalence of these two conditions that avoids the theory of $EL$-labelings.
Thomas previously gave such a proof in \cite{Thomas:2005} (see also
\cite{Thomas:2006}), but this proof required the theory of free lattices.
Our proofs will be more elementary.

An additional result of this paper will be useful as an intermediate
step in some of our proofs, and extends conditions well-known in the
special case of geometric lattices (see \cite[Corollary 2.3]{Stanley:1972}
and \cite[Theorem 3.3]{Brylawski:1975}, also \cite{Birkhoff:1967}).
We say that an element $m$ of the graded lattice $L$ with rank function
$\rho$ is \emph{rank modular} if for every $x$ in $L$ the identity
$\rho(m\vee x)+\rho(m\wedge x)=\rho(m)+\rho(x)$ holds. We will show:
\begin{lem}
\label{lem:RankEqLeftModular}Let $L$ be a graded finite lattice
with rank function $\rho$. The element $m$ of $L$ is left-modular
if and only if $m$ is rank modular.
\end{lem}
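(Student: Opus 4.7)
The plan is to prove the two implications of the lemma separately, each by a short argument combining the rank function with the appropriate modularity hypothesis.

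For the direction ($\Leftarrow$) (rank-modular implies left-modular), I would argue by contradiction. Suppose $m$ is rank-modular but not left-modular, so there exist $x < y$ in $L$ with $u := x \vee (m \wedge y)$ strictly less than $v := (x \vee m) \wedge y$. A short computation shows $u \vee m = v \vee m = x \vee m$ and $u \wedge m = v \wedge m = m \wedge y$. Applying the rank-modularity identity to both $u$ and $v$ then yields $\rho(u) = \rho(v)$, which contradicts $u < v$ in the graded lattice $L$.

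For the direction ($\Rightarrow$) (left-modular implies rank-modular), the plan is to produce two order-preserving injections
\[
\psi \colon [x, m \vee x] \to [m \wedge x, m], \quad z \mapsto z \wedge m,
\]
\[
\psi' \colon [m \wedge x, x] \to [m, m \vee x], \quad y \mapsto y \vee m.
\]
Injectivity of $\psi$ comes from left-modularity applied to the pair $x < z$: this gives $z = (x \vee m) \wedge z = x \vee (m \wedge z)$, so $z$ is recovered from $\psi(z)$ as $x \vee \psi(z)$. Injectivity of $\psi'$ is analogous, obtained by applying left-modularity to the pair $y < x$ and using $m \wedge x \leq y$ to conclude $y = (y \vee m) \wedge x$. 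Because an injective order-preserving map between finite posets sends a chain of length $\ell$ to a chain of length $\ell$, gradedness gives the inequalities $\rho(m \vee x) - \rho(x) \leq \rho(m) - \rho(m \wedge x)$ and $\rho(x) - \rho(m \wedge x) \leq \rho(m \vee x) - \rho(m)$, which rearrange to the same equality, namely rank-modularity.

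The subtle point is the forward direction. A first instinct is to show that $\psi$ is in fact a bijection, giving a lattice isomorphism $[x, m \vee x] \cong [m \wedge x, m]$. However, surjectivity of $\psi$ would require the identity $(y \vee x) \wedge m = y$ for $y \in [m \wedge x, m]$, which calls for right-modularity of $m$ rather than left-modularity alone. The key observation behind the plan is that one does not need a bijection: using a second injection between the complementary pair of intervals $[m \wedge x, x]$ and $[m, m \vee x]$ supplies the reverse rank inequality without any appeal to right-modularity.
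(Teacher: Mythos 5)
Your proof is correct and follows essentially the same route as the paper: the reverse direction builds the pentagon $u<v$ with $m$ and derives a rank contradiction (the paper's Lemma~\ref{lem:PentagonsBadRankModular}), and the forward direction obtains the two rank inequalities by comparing chain lengths across the complementary interval pairs. The only cosmetic difference is that you verify injectivity of both maps directly from the left-modular identity, where the paper runs one cover-by-cover argument via Lemma~\ref{lem:PentagonsLM} and gets the second inequality by passing to the dual lattice.
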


We say that a maximal chain is \emph{rank modular} if it consists
of rank modular elements. In view of Theorem~\ref{thm:OldChars}~(3),
an immediate consequence of Lemma~\ref{lem:RankEqLeftModular} will
be:
\begin{thm}
\label{thm:RankMod} The maximal chain $\mathbf{m}$ of finite lattice
$L$ is a chief chain if and only if
\begin{enumerate}[label=(5),ref=5]
\item \label{enu:RankModular}$L$ is graded, and $\mathbf{m}$ is a rank
modular maximal chain.
\end{enumerate}
\end{thm}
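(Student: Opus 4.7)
The plan is to derive Theorem~\ref{thm:RankMod} as a direct corollary of Lemma~\ref{lem:RankEqLeftModular} together with the equivalence (\ref{enu:Supersolvable})~$\iff$~(\ref{enu:Char-GradedLM}) in Theorem~\ref{thm:OldChars}. Essentially all the work has already been placed in the lemma, so the proof amounts to a two-step translation in each direction.

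For the forward direction, I would assume that $\mathbf{m}$ is a chief chain. By Theorem~\ref{thm:OldChars}~(\ref{enu:Char-GradedLM}), $L$ is then graded and each element of $\mathbf{m}$ is left-modular. Applying Lemma~\ref{lem:RankEqLeftModular} to each element of $\mathbf{m}$ in turn (which is legitimate because $L$ is graded), left-modularity converts to rank-modularity, so $\mathbf{m}$ is a rank modular maximal chain in a graded lattice; this is condition~(\ref{enu:RankModular}).

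For the reverse direction, I would assume condition~(\ref{enu:RankModular}): $L$ is graded and $\mathbf{m}$ consists of rank modular elements. Again applying Lemma~\ref{lem:RankEqLeftModular} elementwise, each element of $\mathbf{m}$ is left-modular. Thus $L$ is graded and admits a maximal chain of left-modular elements, which by Theorem~\ref{thm:OldChars}~(\ref{enu:Char-GradedLM}) means $\mathbf{m}$ is a chief chain.

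There is really no main obstacle once Lemma~\ref{lem:RankEqLeftModular} is in hand; the genuine content of the theorem lies entirely in that lemma, and the proof of Theorem~\ref{thm:RankMod} itself is a formal bookkeeping step of one or two sentences. The only thing to be careful about is that Lemma~\ref{lem:RankEqLeftModular} requires $L$ to be graded as a hypothesis, but this is built into both condition~(\ref{enu:RankModular}) and (via Theorem~\ref{thm:OldChars}) into being a chief chain, so the application is immediate in both directions.
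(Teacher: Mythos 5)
Your proposal is correct and matches the paper's own derivation: the paper likewise obtains Theorem~\ref{thm:RankMod} as an immediate consequence of Lemma~\ref{lem:RankEqLeftModular} (applied elementwise to convert between left-modularity and rank modularity) combined with the equivalence of conditions~(\ref{enu:Supersolvable}) and~(\ref{enu:Char-GradedLM}) from Theorem~\ref{thm:OldChars}. As you note, the real content lives in the lemma, and the theorem itself is just the two-step translation you describe.
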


\begin{rem}
The role of a chief chain in a supersolvable lattice generalizes that
of the chief series in the subgroup lattice of a supersolvable group.
This explains our terminology. Stanley \cite{Stanley:1972,Stanley:2012}
and other authors call this chain an \emph{$M$-chain}. Theorem~\ref{thm:MainTheorem}
explains exactly the extent to which this $M$ stands for ``modular.''
\end{rem}

A basic example of a supersolvable lattice is that of the partition
lattice $\Pi_{n}$, consisting of all partitions of $[n]$, ordered
by refinement. The elements in $\Pi_{n}$ having at most one non-singleton
block are easily seen to coincide with those satisfying the rank modular
condition. Björner \cite{Bjorner:1987} and Haiman \cite{Haiman:1994}
extended the family of finite partition lattices to a non-discrete
limit lattice. They observed that a similar condition to that of Theorem~\ref{thm:RankMod}
holds in this setting with respect to a generalized rank function.
It would be interesting to extend other characterizations of supersolvable
lattices to the non-discrete setting.

\subsection*{Organization}

This paper is organized as follows. In Section~\ref{sec:Background}
we lay out a lemma and other preliminary material for use throughout.
In Section~\ref{sec:StanleySupersolvable} we prove that Condition~(\ref{enu:ChainModular})
is equivalent to Condition~(\ref{enu:Supersolvable}), which gives
one proof of Theorem~\ref{thm:MainTheorem}. In Section~\ref{sec:GradedLM}
we first show that Conditions~(\ref{enu:Char-GradedLM}) and (\ref{enu:RankModular})
are equivalent, then give a proof, independent of Section~\ref{sec:StanleySupersolvable},
that both are equivalent to (\ref{enu:ChainModular}). This gives
a second proof of Theorem~\ref{thm:MainTheorem}. When combined with
Section~\ref{sec:StanleySupersolvable}, the results of this section
give a new simple proof that Condition~(\ref{enu:Char-GradedLM})
implies Condition~(\ref{enu:Supersolvable}). In Section~\ref{sec:CheckingChainMod},
we give a simple condition for checking chain-modularity of a maximal
chain. We close in Section~\ref{sec:Questions} by asking about generalizations
to non-discrete lattices.

\subsection*{Acknowledgements}

We thank the anonymous referee for thoughtful and helpful comments.

\section{\label{sec:Background}Preliminaries}

In this section, we give some basic results on modular elements in
a lattice. Additional background can be found in textbooks such as
\cite[Chapter 3]{Stanley:2012} or \cite[Chapter V]{Gratzer:2011a}.
We generally follow the notation of \cite[Chapter 3]{Stanley:2012}.

First, in any lattice $L$ and for any $x<y$ and $z$, we have the
\emph{modular inequality} 
\[
x\vee(z\wedge y)\leq(x\vee z)\wedge y.
\]
Thus, the modular conditions on elements that we have discussed give
sufficient conditions for the modular inequality to be an equality.

\begin{figure}
\begin{centering}
\includegraphics{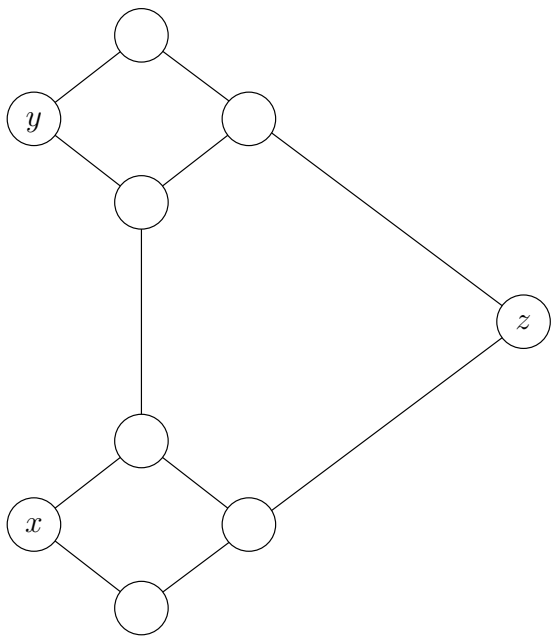}
\par\end{centering}
\caption{\label{fig:ModularGeneral}A strict modular inequality on $x<y$ and
$z$ yields a pentagon sublattice.}
\end{figure}

The general form of a lattice generated by $x<y$ and $z$ is shown
in Figure~\ref{fig:ModularGeneral} (see \cite[Figure 6]{Gratzer:2011a}).
It is easy to see from this figure that an element is left-modular
if and only if it avoids being the short side of a pentagon sublattice,
but that the situation with right-modularity or right chain-modularity
is somewhat more complicated. Here, by the \emph{short side} of the
pentagon sublattice, we mean the element $z$ of Figure~\ref{fig:ModularGeneral};
similarly, by the \emph{long side} we mean the elements $x<y$.
\begin{lem}[{Pentagonal characterization of left-modularity \cite[Theorem 1.4]{Liu/Sagan:2000}}]
 \label{lem:PentagonsLM} An element $z$ of a lattice $L$ is left-modular
if and only if for each $x<y$ either $x\vee z\neq y\vee z$ or $x\wedge z\neq y\wedge z$.

If $z$ is left-modular and in addition $x\lessdot y$, then exactly
one of the equalities $x\vee z=y\vee z$ and $x\wedge z=y\wedge z$
holds.
\end{lem}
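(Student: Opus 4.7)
The plan is to prove both directions of the first part by reasoning directly from the definition of left-modularity (i.e., the modular identity $x \vee (z \wedge y) = (x \vee z) \wedge y$ for all $x < y$), and then to deduce the second part from the first using the covering hypothesis.

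For the forward direction of the first part, I would argue by contrapositive: suppose there exist $x < y$ with $x \vee z = y \vee z$ and $x \wedge z = y \wedge z$. Then $z \wedge y = z \wedge x \leq x$ and $x \vee z = y \vee z \geq y$, so $x \vee (z \wedge y) = x$ while $(x \vee z) \wedge y = y$. Left-modularity would force $x = y$, a contradiction. Thus $z$ left-modular rules out pentagonal configurations.

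For the backward direction, fix arbitrary $x < y$ and set $a = x \vee (z \wedge y)$ and $b = (x \vee z) \wedge y$. The modular inequality gives $a \leq b$. The main computation is to verify that $a$ and $b$ have identical join and meet with $z$: one checks $a \vee z = x \vee z = b \vee z$ (using $x \leq b \leq x \vee z$) and $a \wedge z = z \wedge y = b \wedge z$ (using $z \wedge y \leq a \leq y$ and $z \wedge y \leq b \leq y$). If $a < b$, this contradicts the pentagonal hypothesis applied to the pair $a < b$; hence $a = b$, establishing left-modularity. I expect this bookkeeping of joins and meets to be the most delicate step, but it is routine once the right $a$ and $b$ are chosen.

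For the second part, the first part already shows that at most one of the equalities holds, so the task is to rule out the case where \emph{neither} holds, using the cover $x \lessdot y$. Assuming $x \wedge z < y \wedge z$, the element $x \vee (z \wedge y)$ strictly exceeds $x$, lies below $y$, and so by the cover equals $y$. Left-modularity then forces $(x \vee z) \wedge y = y$, whence $y \leq x \vee z$ and $y \vee z \leq x \vee z$; combined with the reverse inequality, this gives $x \vee z = y \vee z$, the other equality of the pair. So at least one equality must hold, completing the proof.
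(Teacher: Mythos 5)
Your proof is correct and complete: the contrapositive argument for the forward direction, the computation showing $a = x\vee(z\wedge y)$ and $b = (x\vee z)\wedge y$ have the same join and meet with $z$ for the converse, and the use of the cover $x\lessdot y$ to force $x\vee(z\wedge y)=y$ in the second part are all sound. The paper itself gives no proof, citing the result to Liu and Sagan, so there is nothing to compare against; your argument is the standard pentagonal one that the cited source uses.
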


\section{\label{sec:StanleySupersolvable}Equivalence with supersolvability}

In this section, we show the equivalence of Conditions (\ref{enu:Supersolvable})
and (\ref{enu:ChainModular}). That is, we show that a maximal chain
$\mathbf{m}$ is a chief chain if and only if it is chain-modular.

If $\mathbf{m}$ is a chief chain, then the sublattice generated by
$\mathbf{m}$ and any other chain is distributive. Since distributive
lattices have no pentagon sublattices, it follows immediately by considering
$1$-element chains that $\mathbf{m}$ is right chain-modular, and
by considering $2$-element chains that $\mathbf{m}$ consists of
left-modular elements.

Conversely, if $\mathbf{m}$ is a right chain-modular maximal chain
then, following Stanley \cite[Proposition 2.1]{Stanley:1972} and
Birkhoff \cite[pp. 65-66]{Birkhoff:1967}, it suffices to prove a
pair of dual identities, as follows. For the reader's convenience,
we use notation similar to that in \cite{Stanley:1972}.
\begin{lem}
\label{lem:ExtBirkhoffLemma}Let $L$ be a lattice and $\mathbf{m}$
be a chain-modular chain in $L$. If $a_{1}\geq\cdots\geq a_{r}$
are elements from $\mathbf{m}$ and $b_{1}\leq\cdots\leq b_{r}$ are
elements from an arbitrary chain in $L$, then 
\begin{align}
(b_{1}\vee a_{1})\wedge\cdots\wedge(b_{r}\vee a_{r}) & =b_{1}\vee(a_{1}\wedge b_{2})\vee\cdots\vee(a_{r-1}\wedge b_{r})\vee a_{r},\text{ and}\label{eq:BirkhoffCoatoms}\\
(a_{1}\wedge b_{1})\vee\cdots\vee(a_{r}\wedge b_{r}) & =a_{1}\wedge(b_{1}\vee a_{2})\wedge\cdots\wedge(b_{r-1}\vee a_{r})\wedge b_{r}.\label{eq:BirkhoffAtoms}
\end{align}
\end{lem}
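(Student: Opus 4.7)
The plan is to prove identity~(\ref{eq:BirkhoffCoatoms}) by induction on $r$, then deduce (\ref{eq:BirkhoffAtoms}) by duality. For the duality step, observe that both left-modularity and right chain-modularity are self-dual conditions: the identity $x \vee (m \wedge y) = (x \vee m) \wedge y$ for $x \leq y$ transforms under $\vee \leftrightarrow \wedge$ and $\leq\, \leftrightarrow\, \geq$ into itself (after renaming $x$ and $y$), so $\mathbf{m}$ remains chain-modular in the opposite lattice $L^{\mathrm{op}}$. Applying (\ref{eq:BirkhoffCoatoms}) in $L^{\mathrm{op}}$ to the reversed sequences $a_r, a_{r-1}, \ldots, a_1$ and $b_r, b_{r-1}, \ldots, b_1$ and translating joins and meets back to $L$ yields (\ref{eq:BirkhoffAtoms}).

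For the induction on (\ref{eq:BirkhoffCoatoms}), the base case $r = 1$ is trivial. For the inductive step, let $L_r$ denote the left-hand side and write $L_r = L_{r-1} \wedge (b_r \vee a_r)$. By the inductive hypothesis, $L_{r-1} = x \vee a_{r-1}$, where
\[
x = b_1 \vee (a_1 \wedge b_2) \vee \cdots \vee (a_{r-2} \wedge b_{r-1}).
\]
Every summand of $x$ is bounded above by $b_{r-1} \leq b_r$, so $x \leq b_r \vee a_r$. Left-modularity of $a_{r-1} \in \mathbf{m}$ then gives
\[
L_r = (x \vee a_{r-1}) \wedge (b_r \vee a_r) = x \vee \bigl(a_{r-1} \wedge (b_r \vee a_r)\bigr).
\]
Applying right chain-modularity to the pair $a_r < a_{r-1}$ in $\mathbf{m}$ with $z = b_r$ yields
\[
a_{r-1} \wedge (b_r \vee a_r) = a_r \vee (a_{r-1} \wedge b_r),
\]
and substituting into the previous display produces the desired right-hand side.

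The main obstacle is spotting the correct decomposition $L_{r-1} = x \vee a_{r-1}$ so that both ingredients of chain-modularity apply cleanly at the inductive step: left-modularity of $a_{r-1}$ handles the outer join with $x$, while right chain-modularity on the pair $a_r < a_{r-1}$ handles the residual meet $a_{r-1} \wedge (b_r \vee a_r)$. Once this decomposition is identified, the induction proceeds mechanically, and the self-duality argument delivers (\ref{eq:BirkhoffAtoms}) without any extra work.
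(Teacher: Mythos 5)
Your proof is correct, and it takes a genuinely different route from the paper's. The paper runs a \emph{simultaneous} induction in which the $r$th case of (\ref{eq:BirkhoffCoatoms}) is deduced from the $(r-1)$st case of \emph{both} identities: it peels off $b_{1}$ and $a_{1}$ at the front, applies left-modularity of $a_{1}$ and right chain-modularity of the pair $a_{r}\leq a_{1}$, and then invokes the inductive case of (\ref{eq:BirkhoffAtoms}) twice to reassemble the expression (which is also why the paper normalizes $\hat{0},\hat{1}\in\mathbf{m}$, so that $b_{2}=\hat{1}\wedge b_{2}$). You instead peel off the last factor $(b_{r}\vee a_{r})$, and the observation that $x=b_{1}\vee(a_{1}\wedge b_{2})\vee\cdots\vee(a_{r-2}\wedge b_{r-1})\leq b_{r-1}\leq b_{r}\vee a_{r}$ lets left-modularity of $a_{r-1}$ and right chain-modularity of $a_{r}\leq a_{r-1}$ close the step using only the $(r-1)$st case of (\ref{eq:BirkhoffCoatoms}) itself. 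This fully decouples the two identities, so (\ref{eq:BirkhoffAtoms}) follows from a single dualization at the end; your explicit check that left-modularity and right chain-modularity are self-dual is welcome, since the paper leaves that implicit. What each approach buys: yours is logically simpler (a single-identity induction, no $\hat{0},\hat{1}$ normalization) and shows the interleaving is unnecessary, while the paper's version is the ``simultaneous and self-dual'' induction the authors advertise in their remark as an improvement over Stanley's two separate arguments. One small point worth a sentence in your write-up: when $a_{r}=a_{r-1}$ the right chain-modularity hypothesis, stated for strict pairs $x<y$ in $\mathbf{m}$, does not literally apply, but the needed identity $a_{r-1}\wedge(b_{r}\vee a_{r})=(a_{r-1}\wedge b_{r})\vee a_{r}$ is then immediate since both sides equal $a_{r}$.
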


\begin{proof}
We proceed by simultaneous induction on $r$. We may assume without
loss of generality that $\hat{0},\hat{1}\in\mathbf{m}$.

By duality, it suffices to prove the first identity, so we take $x$
to be the left-hand side element of (\ref{eq:BirkhoffCoatoms}). We
apply several modular identities. First, since $b_{1}\leq(b_{2}\vee a_{2})\wedge\cdots(b_{r}\wedge a_{r})$,
by left-modularity of $a_{1}$ we can rewrite 
\begin{align*}
x & =b_{1}\vee\big(a_{1}\wedge(b_{2}\vee a_{2})\wedge\cdots\wedge(b_{r}\vee a_{r})\big),\\
 & =b_{1}\vee\bigg(a_{1}\wedge\big(b_{2}\vee(a_{2}\wedge b_{3})\vee\cdots(a_{r-1}\wedge b_{r})\vee a_{r}\big)\bigg),
\end{align*}
where the latter equality is by induction. Now, as $a_{r}\leq a_{1}$
are in $\mathbf{m}$, we can apply right chain-modularity to rewrite
\[
x=b_{1}\vee\bigg(a_{1}\wedge\big(b_{2}\vee(a_{2}\wedge b_{3})\vee\cdots(a_{r-1}\wedge b_{r})\big)\bigg)\vee a_{r}.
\]
At this point, we notice that $b_{2}=\hat{1}\wedge b_{2}$, so we
can apply (\ref{eq:BirkhoffAtoms}) inductively to rewrite 
\begin{align*}
x & =b_{1}\vee\bigg(a_{1}\wedge\big((b_{2}\vee a_{2})\wedge\cdots\wedge(b_{r-1}\vee a_{r-1})\wedge b_{r}\big)\bigg)\vee a_{r}\\
 & =b_{1}\vee\big(a_{1}\wedge(b_{2}\vee a_{2})\wedge\cdots\wedge(b_{r-1}\vee a_{r-1})\wedge b_{r}\big)\vee a_{r}.
\end{align*}
Now a second inductive application of (\ref{eq:BirkhoffAtoms}) yields
the desired.
\end{proof}
\begin{rem}
We replace the two separate arguments of Stanley \cite[Proposition 2.1]{Stanley:1972}
with a simultaneous and self-dual induction, where the $r$th case
of (\ref{eq:BirkhoffCoatoms}) depends on the ($r-1)$st case of both
(\ref{eq:BirkhoffCoatoms}) and (\ref{eq:BirkhoffAtoms}). This argument
seems basically simpler to us than that of Stanley, and we were surprised
not to find it in the literature. We do comment that Thomas gave a
self-dual argument from generally similar hypotheses in \cite[Lemma 9]{Thomas:2005}.
\end{rem}

Since it does not seem to be as well-known as it should be, we briefly
overview the rest of the proof from \cite[pp. 65-66]{Birkhoff:1967};
this proof is also discussed in \cite[Theorem 363]{Gratzer:2011a}.
Let $\mathbf{m}=\left\{ m_{i}\right\} $ be a chain satisfying the
conclusion of Lemma~\ref{lem:ExtBirkhoffLemma}, and $\mathbf{c}=\{c_{j}\}$
be any other chain. Assume without loss of generality that $\mathbf{m}$
and $\mathbf{c}$ both contain $\hat{0}$ and $\hat{1}$. Birkhoff
considers the set $S$ of elements that can be written as a join of
elements of the form $m_{i}\wedge c_{j}$, and the set $S^{*}$ of
elements that can be written as a meet of elements of the form $m_{i}\vee c_{j}$.
He uses (a restricted version of) Lemma~\ref{lem:ExtBirkhoffLemma}
to show that $S=S^{*}$, and so that both coincide with the sublattice
generated by $\mathbf{m}$ and $\mathbf{c}$.

It is easy to see (with no modularity assumption) that if $x=(m_{i_{1}}\wedge c_{j_{1}})\vee\cdots\vee(m_{i_{r}}\wedge c_{j_{r}})$,
then the $i$ indices may be chosen to be increasing and the $j$
indices to be decreasing. It follows after a bit of work that the
sublattice generated by $\mathbf{m}$ and $\mathbf{c}$ is a homomorphic
image of the lattice of order ideals in the direct product $\mathbf{m}\times\mathbf{c}$.
Here, the homomorphism sends the principal order ideal below the pair
$(m_{i},c_{j})$ to $m_{i}\wedge c_{j}$. Thus, the sublattice generated
by $\mathbf{m}$ and $\mathbf{c}$ is the homomorphic image of a lattice
of sets under the operations $\cup$ and $\cap$, hence is distributive.

\section{\label{sec:GradedLM}Equivalence with graded left-modularity}

In this section, we show directly the equivalence of Condition (\ref{enu:ChainModular})
with Conditions (\ref{enu:Char-GradedLM}) and (\ref{enu:RankModular}).

We first prove Lemma~\ref{lem:RankEqLeftModular}, which immediately
gives equivalence of Conditions (\ref{enu:Char-GradedLM}) and (\ref{enu:RankModular}).
We begin by observing:
\begin{lem}
\label{lem:PentagonsBadRankModular}Let $L$ be a graded lattice with
rank function $\rho$. If $z$ together with $x<y$ form a pentagon
sublattice of $L$ (so that $z\wedge x=z\wedge y$ and $z\vee x=z\vee y$),
then we have $\rho(z\vee x)+\rho(z\wedge x)\neq\rho(z)+\rho(x)$ or
$\rho(z\vee y)+\rho(z\wedge y)\neq\rho(z)+\rho(y)$.
\end{lem}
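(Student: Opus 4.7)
My plan is to argue by contradiction, assuming that both of the rank-modular identities
\[
\rho(z\vee x)+\rho(z\wedge x)=\rho(z)+\rho(x) \quad\text{and}\quad \rho(z\vee y)+\rho(z\wedge y)=\rho(z)+\rho(y)
\]
hold simultaneously, and deriving a contradiction with the gradedness of $L$.

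The key observation is that the pentagon hypotheses $z\wedge x=z\wedge y$ and $z\vee x=z\vee y$ immediately yield the rank equalities $\rho(z\vee x)=\rho(z\vee y)$ and $\rho(z\wedge x)=\rho(z\wedge y)$. Thus the left-hand sides of the two displayed rank-modular identities are equal, and subtracting the second identity from the first forces $\rho(x)=\rho(y)$. On the other hand, $L$ is graded and $x<y$, so $\rho(x)<\rho(y)$, which is the desired contradiction. Hence at least one of the two rank-modular identities must fail.

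The proof really is just this short arithmetic manipulation, so I do not anticipate any genuine obstacle; the only thing worth flagging is that the conclusion uses the full strength of a graded rank function (strict monotonicity along $<$), not merely rank along covers. I would present the argument in at most a few lines, writing out the two identities, subtracting, and invoking strict monotonicity of $\rho$ to finish.
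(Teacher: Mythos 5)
Your proposal is correct and is essentially identical to the paper's own (one-line) proof: the pentagon relations force the two left-hand sides to agree, while $\rho(x)<\rho(y)$ forces the right-hand sides to differ, so both identities cannot hold. Nothing further is needed.
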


\begin{proof}
The left-hand sides agree, but $\rho(x)<\rho(y)$.
\end{proof}
Lemma~\ref{lem:RankEqLeftModular} follows easily from Lemma~\ref{lem:PentagonsBadRankModular}
together with Lemma~\ref{lem:PentagonsLM}.
\begin{proof}[Proof (of Lemma~\ref{lem:RankEqLeftModular}).]
 If $m$ is not left-modular, then $L$ has a pentagon sublattice
containing $m$, which then fails the rank modular identity by Lemma~\ref{lem:PentagonsBadRankModular}.

Conversely, if $m$ is a left-modular element and $y$ is some other
element of $L$, then consider a maximal chain $x_{0}\lessdot\cdots\lessdot x_{k}$
on $[m\wedge y,y]$. As each $x_{i}$ satisfies $m\wedge x_{i}=m\wedge y$,
repeated application of Lemma~\ref{lem:PentagonsLM} shows that $x_{0}\vee m<x_{1}\vee m<\cdots<x_{k}\vee m$
are distinct elements on $[m,m\vee y]$. Thus, $k=\rho(y)-\rho(m\wedge y)\leq\rho(m\vee y)-\rho(m)$.
Now the same argument on the dual lattice yields that $\rho(y)-\rho(m\wedge y)\geq\rho(m\vee y)-\rho(m)$,
as desired.
\end{proof}
We now show that Condition~(\ref{enu:RankModular}) implies Condition~(\ref{enu:ChainModular})
by a straightforward calculation. If $m_{i}<m_{j}$ are rank modular
elements in the graded lattice $L$ and $z$ is some other element,
then three applications of the rank modular identity yields that 
\begin{align*}
\rho(m_{i}\vee(z\wedge m_{j})) & =\rho(m_{i})+\rho(z\wedge m_{j})-\rho(z\wedge m_{i})\\
 & =\rho(z\wedge m_{j})-\rho(z)+\rho(z\vee m_{i})\\
 & =\rho(m_{j})-\rho(z\vee m_{j})+\rho(z\vee m_{i}).
\end{align*}
A single application of the same identity then yields that $\rho((m_{i}\vee z)\wedge m_{j})=\rho(m_{i}\vee(z\wedge m_{j}))$.
Since the ranks agree, the modular inequality is an equality, as desired.

It remains only to show that Condition (\ref{enu:ChainModular}) implies
Condition (\ref{enu:RankModular}). Let $\mathbf{m}$ consisting of
$\hat{0}=m_{0}\lessdot m_{1}\lessdot\cdots\lessdot m_{n}=\hat{1}$
be a chain-modular maximal chain. Define a function $\rho:L\to\mathbb{N}$
by 
\[
\rho(y)=\#\left\{ i\,:\,m_{i+1}\wedge y>m_{i}\wedge y\right\} .
\]
Since when $x<y$, it holds that $m_{i+1}\wedge x>m_{i}\wedge x$
only if $m_{i+1}\wedge y>m_{i}\wedge y$, the function $\rho$ is
weakly order preserving. We'll show that indeed $\rho$ is the rank
function, and that $\mathbf{m}$ is rank modular.

We first verify the rank modularity condition. It is enough to show
that $\rho(x)$ and $\rho(x\vee m_{j})+\rho(x\wedge m_{j})-\rho(m_{j})$
count the same subset of the modular chain. Indeed, for any $x\in L$,
we notice that:
\begin{itemize}
\item $\rho(x\vee m_{j})=\#\left\{ i\,:\,\left(m_{j}\vee x\right)\wedge m_{i+1}>\left(m_{j}\vee x\right)\wedge m_{i}\right\} $.
Those $i$ with $i<j$ clearly always satisfy the condition. When
$i\geq j$, the condition becomes $m_{j}\vee\left(x\wedge m_{i+1}\right)>m_{j}\vee(x\wedge m_{i})$
by right chain-modularity. Clearly if $x\wedge m_{i+1}=x\wedge m_{i}$
then $i$ does not satisfy the condition (and is not counted). Conversely,
if $x\wedge m_{i+1}>x\wedge m_{i}$, then since $m_{j}\wedge(x\wedge m_{i+1})=m_{j}\wedge(x\wedge m_{i})$,
Lemma~\ref{lem:PentagonsLM} yields that $m_{j}\vee(x\wedge m_{i+1})>m_{j}\vee(x\wedge m_{i})$.\\
In short, $\rho(x\vee m_{j})$ counts the number of $i\geq j$ for
which $m_{i+1}\wedge x>m_{i}\wedge x$, together with all $i<j$.
\item $\rho(x\wedge m_{j})=\#\left\{ i\,:\,\left(m_{j}\wedge x\right)\wedge m_{i+1}>\left(m_{j}\wedge x\right)\wedge m_{i}\right\} $
clearly counts the number of $i<j$ for which $x\wedge m_{i+1}>x\wedge m_{i}$.
\item $\rho(m_{j})=j$ counts the $m_{i}$'s with $i<j$.
\end{itemize}
Thus, among the indices counted by $\rho(x)$, we see that $\left(\rho(x\vee m_{j})-\rho(m_{j})\right)$
counts exactly those indices $i$ with $i\geq j$, while $\rho(x\wedge m_{j})$
counts the indices $i$ with $i<j$.

Now we finish the proof by using the rank modular condition to show
that if $y\gtrdot x$, then also $\rho(y)=\rho(x)+1$. For in this
case, let $i$ be the least index for which $x\wedge m_{i+1}<y\wedge m_{i+1}$.
Then $x\vee m_{i+1}=y\vee m_{i+1}$ by Lemma~\ref{lem:PentagonsLM}.
As $m_{i}\wedge x\wedge m_{i+1}=m_{i}\wedge y\wedge m_{i+1}$ by minimality
of $i$, a second application of Lemma~\ref{lem:PentagonsLM} gives
that $m_{i}\vee x\wedge m_{i+1}<m_{i}\vee y\wedge m_{i+1}$. Thus,
$m_{i}\vee x\wedge m_{i+1}=m_{i}$ and $m_{i}\vee y\wedge m_{i+1}=m_{i+1}$.
We apply rank modularity twice to calculate
\begin{align*}
\rho(y)-\rho(x) & =\rho(y\wedge m_{i+1})-\rho(x\wedge m_{i+1})\\
 & =\rho(m_{i}\vee y\wedge m_{i+1})-\rho(m_{i}\vee x\wedge m_{i+1})\\
 & =\rho(m_{i+1})-\rho(m_{i})=(i+1)-i=1,
\end{align*}
as desired.

\section{\label{sec:CheckingChainMod}How to check chain-modularity}

In this section, we reduce checking right chain-modularity of a maximal
chain $\mathbf{m}$ consisting of left-modular elements to checking
the condition on the cover relations of $\mathbf{m}$.

The main lemma that allows us to do this is as follow:
\begin{lem}
Let $m_{1}<m_{2}<m_{3}$ be elements in a lattice $L$, where $m_{2}$
is left-modular. If $m_{1}<m_{2}$ and $m_{2}<m_{3}$ are both right
chain-modular, then also $m_{1}<m_{2}<m_{3}$ is right chain-modular.
\end{lem}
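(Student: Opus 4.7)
The plan is to exploit the left-modularity of the middle element $m_2$ to glue together the two given right chain-modularity conditions. Concretely, write $a=m_1\vee(z\wedge m_3)$ and $b=(m_1\vee z)\wedge m_3$, so that the goal is $a=b$, with $a\leq b$ already free from the modular inequality. By Lemma~\ref{lem:PentagonsLM}, since $m_2$ is left-modular, it will suffice to verify the two joint identities $a\vee m_2=b\vee m_2$ and $a\wedge m_2=b\wedge m_2$; then $a<b$ is impossible, forcing $a=b$.

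For the join identity, I would compute both sides and show each collapses to $(m_2\vee z)\wedge m_3$. Since $m_1\leq m_2$, the expression $a\vee m_2$ simplifies to $m_2\vee(z\wedge m_3)$, which equals $(m_2\vee z)\wedge m_3$ by right chain-modularity of $m_2<m_3$ applied to $z$. On the other hand, $b\vee m_2=m_2\vee\bigl((m_1\vee z)\wedge m_3\bigr)$, and a second application of right chain-modularity of $m_2<m_3$ with input $m_1\vee z$ rewrites this as $(m_2\vee m_1\vee z)\wedge m_3=(m_2\vee z)\wedge m_3$, using $m_1\leq m_2$ again.

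For the meet identity, both sides should reduce to $m_1\vee(z\wedge m_2)$. Indeed, $b\wedge m_2=(m_1\vee z)\wedge m_3\wedge m_2=(m_1\vee z)\wedge m_2$ (since $m_2\leq m_3$), and right chain-modularity of $m_1<m_2$ applied to $z$ gives $(m_1\vee z)\wedge m_2=m_1\vee(z\wedge m_2)$. For $a\wedge m_2$, apply right chain-modularity of $m_1<m_2$ to $z\wedge m_3$ to get $a\wedge m_2=m_1\vee\bigl((z\wedge m_3)\wedge m_2\bigr)=m_1\vee(z\wedge m_2)$.

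The main conceptual step -- and in my view the only nontrivial part -- is the initial observation that the job reduces, via the pentagonal form of left-modularity, to checking equality after joining and meeting with $m_2$. Once this reduction is in hand, each of the four resulting equalities is one direct application of a hypothesis. No inductive machinery or case analysis is required.
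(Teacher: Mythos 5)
Your proof is correct and follows essentially the same route as the paper: both arguments set $a=m_1\vee(z\wedge m_3)\leq b=(m_1\vee z)\wedge m_3$, verify $a\vee m_2=b\vee m_2$ and $a\wedge m_2=b\wedge m_2$ using the two given right chain-modularity hypotheses, and conclude $a=b$ from the pentagonal characterization of left-modularity of $m_2$ (the paper phrases this as a contradiction with a pentagon sublattice, which is the same mechanism).
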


\begin{proof}
It remains to check that $m_{1}<m_{3}$ satisfies the required right
modularity property. Suppose for contradiction that there is some
$y$ so that $m_{1}\vee(y\wedge m_{3})<(m_{1}\vee y)\wedge m_{3}$.
Then since 
\begin{align*}
m_{1}\vee y\wedge m_{2} & =m_{1}\vee(y\wedge m_{3})\wedge m_{2}<m_{1}\vee(y\wedge m_{3}),\,\text{while}\\
m_{2}\vee y\wedge m_{3} & =m_{2}\vee(m_{1}\vee y)\wedge m_{3}>m_{2}\vee(m_{1}\vee y),
\end{align*}
we have $m_{2}$ incomparable to $m_{1}\vee(y\wedge m_{3})<(m_{1}\vee y)\wedge m_{3}$.
But now these three elements generate a pentagon sublattice, contradicting
left-modularity of $m_{2}$.
\end{proof}
A straightforward inductive argument now yields:
\begin{thm}
Let $L$ be a finite lattice, and $\mathbf{m}$ a maximal chain consisting
of $\hat{0}=m_{0}\lessdot m_{1}\lessdot\cdots\lessdot m_{n}=\hat{1}$.
If each $m_{i}$ is left-modular, and each $m_{i}\lessdot m_{i+1}$
is right chain-modular, then $\mathbf{m}$ is chain-modular.
\end{thm}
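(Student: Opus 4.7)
The plan is to prove the theorem by a short induction, using the preceding lemma as the essential tool. Since chain-modularity of $\mathbf{m}$ means that (a) every element of $\mathbf{m}$ is left-modular, and (b) every pair $m_i < m_j$ from $\mathbf{m}$ together with any $z \in L$ satisfies the right modular identity, only (b) needs to be established; (a) is part of the hypotheses. So the real content is showing that for all $0 \le i < j \le n$ and every $z \in L$,
\[
m_i \vee (z \wedge m_j) = (m_i \vee z) \wedge m_j.
\]

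I would induct on $d = j - i$. The base case $d = 1$ is precisely the hypothesis that each cover relation $m_i \lessdot m_{i+1}$ is right chain-modular. For the inductive step, assume $d \ge 2$ and that every pair $m_{i'} < m_{j'}$ with $j' - i' < d$ satisfies the right modular identity. Given $i < j$ with $j - i = d$, choose any intermediate index $k$ with $i < k < j$ (for instance $k = i+1$). By the inductive hypothesis, both two-element chains $m_i < m_k$ and $m_k < m_j$ are right chain-modular, and $m_k$ is left-modular by assumption. The previous lemma then applies to the triple $m_i < m_k < m_j$ and yields that this three-element chain is right chain-modular; in particular, the pair $m_i < m_j$ satisfies the right modular identity with every $z \in L$. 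This closes the induction.

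I do not expect any genuine obstacle here: the single nontrivial step, namely reducing a longer pair to two shorter ones together with left-modularity of the intermediate element, is exactly what the lemma was designed to do. The only minor care required is to verify that the quantifier over $z \in L$ in the definition of right chain-modularity is preserved through the induction, which is automatic because the lemma's conclusion is uniform in $z$.
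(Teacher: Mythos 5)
Your proof is correct and is precisely the ``straightforward inductive argument'' that the paper invokes without writing out: induction on $j-i$, with the base case given by the hypothesis on cover relations and the inductive step supplied by the preceding lemma applied to $m_i < m_k < m_j$. No differences from the paper's intended argument.
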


Checking whether a cover relation is right chain-modular is easier
than checking an arbitrary chain of two elements. Indeed, in contrast
to right chain-modularity for an arbitrary pair of elements (see Figure~\ref{fig:ModularGeneral}),
we can reduce right chain-modularity of a cover relation to the following
``no pentagon'' condition.
\begin{lem}
If $m_{i}\lessdot m_{i+1}$ is a cover relation in lattice $L$, then
the pair is right chain-modular if and only if $m_{i}\lessdot m_{i+1}$
does not form the long side of any pentagon sublattice of $L$.
\end{lem}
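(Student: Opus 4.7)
The key observation is that the interval $[m_i,m_{i+1}]$ contains only two elements, so the modular inequality becomes a sharp dichotomy. Concretely, for any $z \in L$, the modular inequality $m_i \vee (z \wedge m_{i+1}) \leq (m_i \vee z) \wedge m_{i+1}$ always holds, and both sides lie in $[m_i,m_{i+1}]$. Since $m_i \lessdot m_{i+1}$, there are only two options: either the inequality is an equality, or the left-hand side equals $m_i$ and the right-hand side equals $m_{i+1}$.

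The plan is therefore as follows. For the forward direction (contrapositive), I would assume right chain-modularity fails, take a witness $z$, and deduce from the above dichotomy that $z \wedge m_{i+1} \leq m_i$ and $z \vee m_i \geq m_{i+1}$. Absorbing these into the trivial inequalities in the other direction yields $z \wedge m_i = z \wedge m_{i+1}$ and $z \vee m_i = z \vee m_{i+1}$, which are exactly the equalities defining a pentagon with $z$ on the short side and $m_i < m_{i+1}$ on the long side. One also has to rule out comparability of $z$ with either $m_i$ or $m_{i+1}$, but this is automatic: if $z \leq m_i$ or $z \geq m_{i+1}$, both sides of the modular inequality collapse to the same value, and comparabilities like $m_i < z < m_{i+1}$ are excluded by the cover relation.

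The converse direction is an almost immediate substitution: if $z$ together with $m_i < m_{i+1}$ generates a pentagon sublattice, then $z \wedge m_{i+1} = z \wedge m_i \leq m_i$ gives $m_i \vee (z \wedge m_{i+1}) = m_i$, and $z \vee m_i = z \vee m_{i+1} \geq m_{i+1}$ gives $(m_i \vee z) \wedge m_{i+1} = m_{i+1}$, so right chain-modularity fails at $z$. There is no real obstacle here; the whole argument is a direct consequence of the cover relation forcing a binary choice in the modular inequality. The only thing requiring a moment of care is confirming that the witness $z$ is genuinely incomparable to $m_i$ and $m_{i+1}$, so that one truly obtains a (nondegenerate) pentagon rather than a collapsed diagram.
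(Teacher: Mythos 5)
Your proposal is correct and follows essentially the same route as the paper: both proofs use the cover relation $m_i\lessdot m_{i+1}$ to force each side of the modular inequality to be either $m_i$ or $m_{i+1}$, so that a strict inequality pins the left side to $m_i$ and the right side to $m_{i+1}$ and produces a pentagon with $m_i\lessdot m_{i+1}$ as its long side, with the converse direction being a direct substitution. The only difference is cosmetic: the paper cites the general fact that a strict modular inequality on $x$ and $m<m'$ already yields a pentagon on $m\vee(x\wedge m')$, $(m\vee x)\wedge m'$, and $x$, whereas you rederive the pentagon equalities (and the incomparability of the witness $z$) by hand.
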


\begin{proof}
For $m<m'$ with $m\vee(x\wedge m')<(m\vee x)\wedge m'$, the latter
two elements together with $x$ form a pentagon sublattice. If $m_{i}\lessdot m_{i+1}$
is a cover relation, then $m_{i}\vee(x\wedge m_{i+1})$ is either
$m_{i}$ or $m_{i+1}$; similarly for $(m_{i}\vee x)\wedge m_{i+1}$.
It follows that if $m_{i}\lessdot m_{i+1}$ fails to be modular, then
$m_{i}\lessdot m_{i+1}$ forms the long side of a pentagon sublattice.
\end{proof}
Thus, Theorem~\ref{thm:MainTheorem} says that a chain $\mathbf{m}$
of a lattice $L$ is a chief chain if and only if no pentagon sublattice
of $L$ has an element of $\mathbf{m}$ as its short side, nor any
cover relation of $\mathbf{m}$ as its long side.

\section{\label{sec:Questions}Questions}

We came to look at finite supersolvable lattices in this light because
of an interest in non-discrete lattices, such as the continuous partition
lattice of \cite{Bjorner:1987,Haiman:1994}. This lattice is \emph{$[0,1]$-graded},
meaning that there is an order-preserving function from $L$ to the
real interval $[0,1]$ that restricts to a bijection on maximal chains.
Moreover, the lattice has a maximal chain whose elements satisfy the
rank modular condition. Which of our characterizations extend from
the finite or discrete case to the $[0,1]$-graded and other non-discrete
cases?
\begin{question}
Is there a description of $[0,1]$-graded lattices having a rank modular
maximal chain in terms of element- and/or chain-wise modularity?
\end{question}

\bibliographystyle{3_Users_russw_Documents_Research_mypapers_A_mod___ization_of_supersolvable_lattices_hamsplain}
\bibliography{2_Users_russw_Documents_Research_Master}

\end{document}